\documentclass[10pt]{amsart}
\usepackage{amscd,amsmath,amssymb,amsfonts}
\usepackage[all]{xy}
\usepackage{hyperref}
\usepackage{url}
\usepackage{stmaryrd}
\usepackage{color}

\theoremstyle{plain}
\newtheorem{thm}{Theorem}
\newtheorem{lem}[thm]{Lemma}

\newtheorem{prop}[thm]{Proposition}

\theoremstyle{definition}

\newtheorem{claim}[thm]{Claim}
\newtheorem{rmk}[thm]{Remark}

\numberwithin{thm}{section}

\newcommand{\ml}[2]{\begin{multline}\label{#1}#2 \end{multline}}
\newcommand{\ga}[2]{\begin{gather}\label{#1}#2 \end{gather}}

\newcommand{\sE}{{\mathcal E}}

\newcommand{\C}{{\mathbb C}}

\newcommand{\G}{{\mathbb G}}

\renewcommand{\P}{{\mathbb P}}
\newcommand{\Q}{{\mathbb Q}}

\newcommand{\Z}{{\mathbb Z}}

\begin{document}

\title[rigid systems]{ Rigid non-cohomologically rigid local systems   }
\author{Johan de Jong  \and H\'el\`ene Esnault \and Michael  Groechenig}
\address{ Mathematics Hall, 2990 Broadway, New York, NY 10027, USA }
 \email{ dejong@math.columbia.edu}
\address{Freie Universit\"at Berlin, Arnimallee 3, 14195, Berlin,  Germany}
\email{esnault@math.fu-berlin.de}
\address{  Mathematics,  
University of Toronto,  
Bahen Centre, Toronto, Ontario 
Canada 
 }
\email{michael.groechenig@utoronto.ca}

\thanks{ }
%\subjclass{14G17,14G22}

\begin{abstract} 
For any even natural number  $r \ge 2$, we construct an irreducible  rigid non-cohomologically  rigid complex local system  of rank $r$ on a smooth projective variety depending on $r$. For $r=2$, we construct an  irreducible rigid non-cohomogically rigid local system of rank $2$ on a quasi-projective variety   which becomes cohomologically rigid after fixing the conjugacy classes of the monodromies at infinity. \end{abstract}
\maketitle
\section{Introduction}
In \cite[Theorem~1.1]{EG18} it is  proven that on a smooth complex quasi-projective variety $X$, 
 irreducible cohomologically rigid local systems  with a fixed torsion determinant and fixed 
quasi-unipotent conjugacy classes of the monodromies at infinity  are integral. This answers positively a conjecture of Simpson  \cite[p.9]{Sim92} (formulated in the projective case)  under the cohomological rigidity condition, which simply means that the Zariski tangent space of the considered moduli space at the point corresponding to the local system is $0$. This property is used in a crucial way in the proof {\it loc. cit.} However, Simpson's conjecture concerns irreducible rigid local systems,  that is those for which  the corresponding moduli point is isolated, but perhaps fat.
In dimension $1$, Katz proved that all  irreducible rigid local systems  are cohomologically rigid  \cite[Cor.~1.2.5]{Kat96}. 
The irreducible local systems on a Shimura variety of real rank $\ge 2$ are also cohomologically rigid, see  \cite[Remark~A.2]{EG21}. 
In this short note we prove that
\begin{itemize}
\item[i)]  for any even natural number $r \ge 2$,  there exists
 a smooth complex projective variety depending on $r$  with an   irreducible  rigid non-cohomologically  rigid complex local system of rank $r$ on it,\\
\item[ii)]   and there exists a smooth complex quasi-projective variety  with
an irreducible rigid non-cohomogically rigid local system  of rank $2$ which becomes cohomologically rigid after fixing the conjugacy classes of the monodromies at infinity.
\end{itemize} 
The examples are an adaptation to algebraic geometry of \cite[(2.10.4)]{LM85} in which Lubotzky and Magid define the semi-direct product $\Gamma_2$ coming from the standard representation of the symmetric group in $3$ variables and show that the so defined rank $2$ representation $\Lambda_2$ of $\Gamma_2$  is rigid, see {\it loc.cit} (2.10.2), and has ${\rm dim} \ H^1(\Gamma_2, \sE nd(\Lambda_0))=1$.\\ \ \\
 In   our initial  construction, the monodromy was finite, in particular the Zariski closure of the monodromy group  is equal to  itself,  the determinant is finite, and the monodromies at infinity are finite as well. Alexander Petrov noticed that we could  apply K\"unneth formula to the exterior product of our examples with a standard cohomologically rigid local system with infinite monodromy to obtain examples as in (i) and (ii)  (with
 different ranks) with infinite monodromy, see Remark~\ref{rmk:sasha}.
 \\ \ \\
 {\it Acknowledgements}: We thank Vasily Rogov for bringing the example of Lubotszky and Magid {\it loc. cit.} to our attention. Furthermore, it is a pleasure to acknowledge helpful comments by Piotr Achinger, Adrian Langer, Aaron Landesman, and Daniel Litt.   We  are grateful to Alexander Petrov for his interest and for contributing to this note in the form of Remark~\ref{rmk:sasha}.

\section{The standard representation of the symmetric group} \label{sec:sr}

We fix once and for all an even natural number $r\ge 2$. 
For $S_{r+1}$  the symmetric group on $(r+1)$ elements,  we define the standard representation
\ga{}{ \Lambda_r={\rm Ker} (\Z^{r+1}\xrightarrow{\Sigma}  \Z), \  \underline{a}=(a_1, \ldots,a_{r+1})\mapsto \sum_1^{r+1} a_i=\Sigma(\underline{a})  \notag  \\
\rho_r: S_{r+1}\to GL(\Lambda_r)  \notag}
by permuting the elements. This  rank $r$ representation  has among others  two properties:
\begin{itemize}
\item[$(\star)$ ]  $\Lambda_r$ is absolutely irreducible, i.e. $\Lambda_e\otimes_{\Z}\C$ is irreducible;
\item[$(\star \star)$] $\Lambda_r\otimes_{\Z}\C$ is a direct summand of ${\sE nd}^0(\Lambda_r \otimes_{\Z}\C)$.

\end{itemize}
The property $(\star) $ is very classical and is straightforward to check:  the basis vectors  $(e_i-e_{r+1})_{i=1,\ldots, r}$ of $\Lambda_0$ are permuted by the permutations $(i,r+1)_{ i=1,\ldots, r},$where 
 $(e_i)_{ i=1,\ldots, r+1}$ is the standard basis of  $\Z^{r+1}$. The property $(\star \star$) is verified for $r=2$  by the character table in
 \cite[(2.10.4)]{LM85} and in \cite[Ex.1.2]{BDO15} in general.  Here ${\sE nd}^0$ denotes the trace-free endomorphisms. 
 This defines the semi-direct product 
\ga{}{ \Gamma_r:= \Lambda_r \rtimes S_{r+1} \notag \\
0\to \Lambda_r\to \Gamma_r\to S_{r+1}\to 1.\notag }

\section{A rigid non-cohomologically rigid complex system on a smooth projective variety} \label{sec:proj}

Let $E$ be an elliptic curve. We define the abelian variety
 \ga{}{ A_r= E\otimes_{\Z} \Z^{r}= \underbrace{E\times \ldots \times E}_{r-{\rm times}}  \notag}
 of dimension $r$. 
 Via the basis $(\epsilon_i=e_i-e_{r+1})_{i=1,\ldots, r}$ of Section~\ref{sec:sr}, 
 we define $S_{r+1}$ as the subgroup of ${\rm Aut}(A_r)$ defined by
\ga{}{ S_{r+1}=\mathbb{I}_{E}\otimes S_{r+1}\xrightarrow{\mathbb{I}\otimes \rho_r 
 \  {\rm injective}} {\rm Aut}(E) \otimes_{\Z} GL(\Lambda_r)\subset {\rm Aut}(A_r). \notag}
For example, setting $\sigma=(12 \ldots (r+1))$
\ml{1}{ \sigma \big(x_1,\ldots, x_r\big)=\big(-(x_1+x_2+\ldots+x_r), x_1,x_2, \ldots, x_{r-1}\big)\\
{\rm for} \ (x_1,\ldots, x_r)\in E\otimes_{\Z} \Z^r. }
In particular, the origin $0_A$ of the abelian variety is a fixpoint  for this action.  

We now do Serre's construction as in \cite[Sect.~15]{Ser58}.
Let $P_r$ be a smooth projective simply connected variety over $\C$ on which $S_{r+1}$ acts without fixpoint (\cite[Prop.~15]{Ser58}).  We define \ga{}{  Y_r= A_r\times_{\C} P_r, \ X_r=Y_r/S_{r+1} \notag}  where $S_{r+1}$ acts diagonally. This yields the $S_{r+1}$-Galois cover 
\ga{}{\pi_r: Y_r \to X_r\notag}
and the associated Galois exact sequence 
\ga{}{0\to \pi_1(Y_r,y_r)=H_1(A_r)=H_1(E)\otimes_{\Z} \Lambda_0 \to \pi_1(X_r,x_r) \to S_{r+1}\to 1\notag}
where $y_r=(0_A,p_r)$ is a  $\C$-point of $Y_r$ and $x_r=\pi_r(y_r)$.  The Galois exact sequence is the same as the homotopy exact sequence of the smooth projective morphism
\ga{}{  f_r: X_r\to P_r/S_{r+1} \notag}
 with fibres isomorphic to $A$ and with section
 \ga{}{ P_r/S_{r+1}\xrightarrow{\cong} (0_A\times_{\C} P_r)/S_{r+1}\subset X.\notag}
 Thus 
 \ga{}{ \pi_1(X_r,x_r)= \pi_1(Y_r,y_r) \rtimes S_{r+1}\notag}
where  the action of $S_{r+1}$ on $\pi_1(Y_r,y_r)=H_1(E) \otimes_{\Z} \Z^r  $  is equal t to $ \mathbb{I}_{H_1(E) }\otimes_{\Z} \rho_r$.

 We denote by $\mathbb L_r$ the local system defined by the composite 
\ga{}{ \pi_1(X_r,x_r)\to S_{r+1}\xrightarrow{\rho_r} GL(\Lambda_r). \notag} 
We denote by  $M_{B}(X_r,2, {\rm det} ( \mathbb L_r))$ the Betti moduli space of rank $r$ irreducible local systems on $X_r$ with fixed determinant ${\rm det} (\mathbb L_r)$, by $M_{B}(X_r,r)$ the Betti moduli space of rank $r$ irreducible local systems on $X_r$ (\cite[Prop.~2.1]{EG18}) and by 
\ga{}{ \phi_r: M_B(X_r,r, {\rm det} (\mathbb L_r))\to M_B(X_r,r)\notag}
the closed immersion induced by omitting the restriction on the determinant.  We have
\ga{}{ \mathbb L_r\in M_B(X_r,r, {\rm det}(\mathbb L_r)).\notag}
\begin{prop} \label{prop:rig} When $r$ is even, 
$\mathbb L_r$ is rigid in $M_B(X_r,r, {\rm det}(\mathbb L_r))$ and $\phi_r(\mathbb L_r)$ is rigid in $M_B(X_r,r)$.
\end{prop}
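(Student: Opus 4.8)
The plan is to analyze the deformation theory of $\mathbb L_r$ by relating the first cohomology of its adjoint bundle to group cohomology of $\pi_1(X_r, x_r)$, exploiting the semidirect product structure $\pi_1(X_r,x_r) = H_1(A_r) \rtimes S_{r+1}$ and the fact that the monodromy of $\mathbb L_r$ factors through the finite quotient $S_{r+1}$. Concretely, rigidity of $\mathbb L_r$ in $M_B(X_r,r,\det(\mathbb L_r))$ follows once we show $H^1(\pi_1(X_r,x_r), \sE nd^0(\Lambda_r \otimes \C)) = 0$, where $S_{r+1}$ acts on the coefficient module via $\rho_r$ composed with the adjoint action (and $\pi_1$ acts through its quotient $S_{r+1}$); rigidity of $\phi_r(\mathbb L_r)$ in $M_B(X_r,r)$ follows from vanishing of $H^1$ with coefficients in the full $\sE nd(\Lambda_r\otimes\C)$, i.e. adding the scalar summand $\C$ with trivial action, for which $H^1(\pi_1(X_r,x_r),\C) = \Hom(\pi_1(X_r,x_r)^{ab},\C)$ must be computed as well. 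Here one should be slightly careful about the distinction between rigidity (isolated point) and cohomological rigidity (vanishing Zariski tangent space); since we are only claiming rigidity, it suffices to bound the dimension of the relevant deformation space, but in fact I expect the group cohomology $H^1$ to be genuinely nonzero — that is the whole point of the example — so the argument cannot simply be ``$H^1 = 0$''. Instead the key is that the nonzero classes in $H^1$ are \emph{obstructed}, i.e. do not integrate to actual deformations.

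The main technical step is therefore the computation of $H^1(\pi_1(X_r,x_r), \sE nd^0(\Lambda_r\otimes\C))$ via the Lyndon–Hochschild–Serre spectral sequence for $1 \to H_1(A_r) \to \pi_1(X_r,x_r) \to S_{r+1} \to 1$. Since $S_{r+1}$ is finite and we work over $\C$, the higher cohomology of $S_{r+1}$ with any coefficients vanishes, so the spectral sequence degenerates and gives $H^1(\pi_1(X_r,x_r), V) = H^1(H_1(A_r), V)^{S_{r+1}} = \bigl(H^1(A_r,\C)\otimes V\bigr)^{S_{r+1}}$ for $V = \sE nd^0(\Lambda_r\otimes\C)$ with its trivial $H_1(A_r)$-action. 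Now $H^1(A_r,\C) = H^1(E,\C)\otimes_\C (\Lambda_r\otimes\C)$ as an $S_{r+1}$-module, with $S_{r+1}$ acting trivially on $H^1(E,\C)$ and via $\rho_r$ on the second factor. Using property $(\star\star)$ — that $\Lambda_r\otimes\C$ is a direct summand of $\sE nd^0(\Lambda_r\otimes\C)$ — together with Schur's lemma (absolute irreducibility, property $(\star)$, gives $\dim \Hom_{S_{r+1}}(\Lambda_r\otimes\C, \sE nd^0(\Lambda_r\otimes\C)) \ge 1$, in fact exactly $1$ by the character computation of Lubotzky–Magid / \cite{BDO15}), one finds $\bigl(H^1(E,\C)\otimes \Lambda_r\otimes\C \otimes \sE nd^0(\Lambda_r\otimes\C)\bigr)^{S_{r+1}} \cong H^1(E,\C)\otimes \Hom_{S_{r+1}}(\Lambda_r\otimes\C, \sE nd^0(\Lambda_r\otimes\C)) = H^1(E,\C)$, which is $2$-dimensional. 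So $H^1$ is nonzero: $\mathbb L_r$ is not cohomologically rigid, consistent with the title, and we must do more work to conclude rigidity.

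To obtain genuine rigidity I would argue that every first-order deformation of $\mathbb L_r$ with fixed determinant either is obstructed at second order, or corresponds to a local system whose monodromy still factors through a finite group conjugate to $\rho_r$, hence yields the same point of the moduli space. The cleanest route: since the underlying local system has finite monodromy, any nearby local system in $M_B(X_r,r,\det\mathbb L_r)$ is — after possibly passing through the deformation — a representation $\pi_1(X_r) \to GL_r(\C)$ close to $\rho_r$; restricting to the normal abelian subgroup $H_1(A_r)$, which is topologically a finitely generated free abelian group of rank $2r$, one gets a $2r$-tuple of commuting matrices near the identity, and the constraint that $S_{r+1}$ normalizes this (up to the fixed determinant and irreducibility) forces the restriction to $H_1(A_r)$ to remain trivial — essentially because $\rho_r$ has no nonzero $S_{r+1}$-invariant vectors on which a nontrivial family of commuting near-identity matrices could be built while preserving irreducibility, the would-be deformation directions in $H^1(E,\C)\otimes\Hom_{S_{r+1}}(\Lambda_r,\sE nd^0\Lambda_r)$ being exactly the ones killed by the cup-product/commutator obstruction $H^1 \times H^1 \to H^2$. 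This is the step I expect to be the main obstacle, because it requires either an explicit second-order obstruction computation (showing the cup product $H^1\otimes H^1 \to H^2(\pi_1(X_r),\sE nd^0\Lambda_r)$ is injective enough on the relevant classes) or a more structural rigidity argument about abelian-by-finite groups, and one must handle the fixed-determinant constraint and irreducibility throughout. Once rigidity in $M_B(X_r,r,\det\mathbb L_r)$ is established, rigidity of $\phi_r(\mathbb L_r)$ in $M_B(X_r,r)$ is immediate: $\phi_r$ is a closed immersion, and a deformation of $\phi_r(\mathbb L_r)$ in $M_B(X_r,r)$ has determinant a deformation of the finite (hence rigid) character $\det\mathbb L_r$, so it stays in the image of $\phi_r$ and descends to a deformation of $\mathbb L_r$ with fixed determinant, which is trivial.
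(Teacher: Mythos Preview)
Your proposal contains a genuine gap at exactly the point you flag as ``the main obstacle'': you never actually prove rigidity. You correctly compute via Hochschild--Serre that $H^1(\pi_1(X_r,x_r),\sE nd^0(\Lambda_r\otimes\C))\cong H^1(E,\C)\neq 0$ (this is in fact the content of the companion Proposition on non-cohomological rigidity), and you correctly realise that therefore an infinitesimal argument cannot suffice. But your two suggested routes --- a second-order obstruction computation, or an unspecified ``structural rigidity argument about abelian-by-finite groups'' --- remain programmatic; neither is carried out, and there is no indication why the cup product should be injective enough.

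The paper's proof avoids obstruction theory entirely and is global rather than infinitesimal. One takes an affine curve $T$ mapping to $M_B(X_r,r)$ through $\mathbb L_r$. For any $t\in T$, Clifford theory for the normal abelian subgroup $H_1(A_r)\lhd\pi_1(X_r,x_r)$ decomposes $\mathbb L_r(t)|_{H_1(A_r)}$ as a direct sum of $r$ characters $\chi_{t,1},\ldots,\chi_{t,r}$, permuted by $S_{r+1}$. The key point --- and this is where the parity hypothesis on $r$ is used --- is that each $S_{r+1}$-orbit among these characters has size $s\le r<r+1$, so gives a homomorphism $S_{r+1}\to S_s$ whose image has order $1$ or $2$ (the only nontrivial normal subgroup of $S_{r+1}$ being $A_{r+1}$). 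Since $r$ is even, the $(r+1)$-cycle $\sigma$ has odd order and therefore fixes every $\chi_{t,i}$. An explicit calculation with the action of $\sigma$ on $\Lambda_r$ then forces each $\chi_{t,i}$ to be $(r+1)$-torsion. Thus the map $T\to{\rm Hom}(H_1(A_r),\C^*)^r/S_r$ lands in a countable set, hence is constant; so $\mathbb L_r(t)|_{H_1(A_r)}$ is trivial for all $t$, the representation factors through $S_{r+1}$, and rigidity follows. You were looking in the right place (the restriction to $H_1(A_r)$), but missed the combinatorial torsion argument that replaces the obstruction calculation.

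A minor point: the paper deduces rigidity in $M_B(X_r,r,\det\mathbb L_r)$ \emph{from} rigidity in $M_B(X_r,r)$ via the closed immersion $\phi_r$, which is the trivial direction. Your reverse deduction requires knowing that $\det\mathbb L_r$ is rigid as a rank-one local system on $X_r$; this happens to be true here (since $(\Lambda_r\otimes\C)_{S_{r+1}}=0$ forces $H^1(X_r,\C)=0$), but it is an extra step you would need to justify.
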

\begin{proof}  It suffices to prove this for $\phi_r(\mathbb L_r) \in M_B(X_r,r)$ since $\phi$ is a closed immersion. Let  $T$ be an irreducible complex affine curve, together with a morphism
$\tau: T\to M_B(X_r,r)$ and a complex point $t_0\in T$ such that $\tau(t_0)=\mathbb L_r$. 
We fix a complex point $t\in T$, and denote by $\mathbb{L}_r(t)$ the irreducible complex local system corresponding to $\tau(t)$.   As $H_1(A_r)$ is normal in 
$\pi_1(X,x)$, Clifford theory implies that 
 $\mathbb{L}_r(t)|_{H_1(A_r)}$  is a sum of $r$ characters $\chi_{t,1} \oplus  \ldots \oplus \chi_{t,r}$, with $\chi_{t,i}\in 
 {\rm Hom}(H_1(A_r), \C^*)$.  As $\mathbb{L}_r(t)|_{H_1(A_r)}$  is $S_{r+1}$-invariant, the subset
 $\{\chi_{t,1}, \ldots, \chi_{t,r} \}\in {\rm Hom}(H_1(A_r), \C^*)$ consists of a union of $S_{r+1}$-orbits. 

\begin{claim} \label{claim:tor} If $r$ is even, the characters
$\chi_{t,i}$ for $i=1,\ldots, r$ are torsion characters, that is they lie in  
\ga{}{ {\rm Hom}(H_1(E)\otimes_{\Z} \Z^r, 
2\pi i \Q/2 \pi i \Z) \subset  {\rm Hom}(H_1(A_r), \C^*).\notag}

\end{claim}
\begin{proof}
Each single orbit of $S_{r+1}$ in  the set $\{\chi_{t,1}, \ldots, \chi_{t,r}\}$
has length $s\le r < r+1$ and defines a quotient $S_{r+1}\to  S_s$, where $s=1$ or $ 2$ if $r=2$. For $r\ge 4$,  the only non-trivial normal subgroup of $S_{r+1}$ is the alternate group.  
 Thus the image of $S_{r+1}$ in $S_s$ has order $1$ or $2$ in all cases. This implies  that  $\sigma$, which   has odd order, must map to the identity in $S_s$, that is 
\ga{2}{ \sigma (\chi_{t,i})=\chi_{t,i} \ {\rm for} \ i=1,\ldots, r.}
Pick $0 \neq \gamma \in H_1(E)$ and some $i \in \{1,\ldots, r\}$.  Write for $(x_1,\ldots x_r)\in \Z^r$
\ga{}{ \chi_{t,i}(\gamma\otimes x_1,\ldots \gamma\otimes x_r)={\rm exp}(2\pi i (a^1x_1 +\ldots + a^r x_r))\notag}
for some $a^1,\ldots, a^r\in \C$.
Then  \eqref{1} and \eqref{2} yield
\ga{}{ {\rm exp}(2\pi i (a^1x_1 +\ldots + a^r x_r))
= {\rm exp}( 2\pi i(-a^1(x_1+\ldots +x_r)  + a^2 x_1 +\ldots + a^r x_{r-1})).
\notag}
As this is true for all $x_j \in \Z$ we derive
\ga{}{ a^j\in \frac{1}{(r+1)}\Z \ {\rm for \ all} \ j=1,\ldots, r,\notag}
thus  $\chi_{t,i}$ has order dividing $(r+1)$. 
This  finishes the proof.

%Pick $0 \neq \gamma \in H_1(E)$ and for $i=1,\ldots, r$, write for $(x_1,\ldots x_r)\in \Z^r = \Lambda_r$.
%{\color{blue}It follows from \eqref{1} that $\Lambda_r^{\sigma} = \{0\}$, which implies $\sum_{i=0}^{r} \sigma^i (x_1,\ldots,x_r) = 0$ and therefore yields
%\ga{}{\chi_{t,i}^{r+1} = \prod_{i=0}^r \sigma^i (\chi_{t,i}) = 1.}
%Thus, each $\chi_{t,i}$ is a torsion character as asserted.
%}
%\ga{}{ \chi_{t,i}(\gamma\otimes x_1,\ldots \gamma\otimes x_r)={\rm exp}(2\pi i (a_1x_1 +\ldots + a_r x_r)).\notag}
%Then applying \eqref{1}
%\ml{}{ (12\ldots (r+1))  \chi_{t,i}(\gamma\otimes x_1,\ldots, \gamma\otimes x_r)=\\ {\rm exp}( 2\pi i(-a_1(x_1+\ldots +x_r)  + a_2 x_1 +\ldots + a_r x_{r-1}))
%\notag}
%As this is true for all $x_i \in \Z$ we derive
%\ga{}{ a_i\in \frac{1}{(r+1)}\Z \ {\rm for \ all} \ i=1,\ldots, r,\notag}
%which concludes the proof. 
\end{proof}
The scheme map $T\to  {\rm Hom}(H_1(A_r), \C^*)/S_{r+1}$ which sends $t$ to  $(\chi_{t,1}, \ldots, \chi_{t,r})$ up to ordering, has by Claim~\ref{claim:tor}  values in $ ( {\rm Hom}(H_1(A_r), 2\pi i \Q/2 \pi i \Z))^r/S_r$.  This is a countable subset of complex  points, thus has to be finite. As $T$ is connected, it is constant, thus equal to its value at $t=t_0$, thus is the image of  the trivial character of rank $r$. Thus $\mathbb{L}_r(t)$ factors through $S_{r+1}$. Since there are only finitely many complex points in $M_B(X,2)$ the monodromy representation of which factors through $S_{r+1}$, $\mathbb{L}_r$ is rigid. This concludes the proof for $\phi(\mathbb{L}_r)$. For $\mathbb{L}_r$, we assume that $\tau$ factors through $\tau_0: T\to M_B(X,r,{\rm det} (\mathbb{L}_r))$ and we argue with $\mathbb{L}_r(t) \in M_B(X,r, {\rm det}(\mathbb{L}_r))$ corresponding to $\tau_0(t)$ in place of $\mathbb{L}(t)$, which does not change the argument. This concludes the proof.
\end{proof}
\begin{prop} \label{prop:mainex}  Under the assumption of Proposition~\ref{prop:rig}, 
$\mathbb L_r$ is not cohomologically  rigid in $M_B(X_r,r, {\rm det}(\mathbb{L}_r))$, and $\phi(\mathbb L_r)$ 
 is not cohomologically  rigid in $M_B(X_r,r)$.

\end{prop}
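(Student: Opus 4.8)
The plan is to show that the Zariski tangent space to the moduli space at $\mathbb L_r$ (equivalently $\phi_r(\mathbb L_r)$) is nonzero, i.e. $H^1(X_r, \sE nd^0(\mathbb L_r)) \ne 0$ in the determinant-fixed case and $H^1(X_r, \sE nd(\mathbb L_r)) \ne 0$ in the unconstrained case. Since cohomological rigidity at a point means exactly the vanishing of this tangent space, producing a single nonzero class suffices. Because $\mathbb L_r$ has finite monodromy factoring through $S_{r+1}$, its adjoint local systems also have finite monodromy, and the cohomology can be computed group-theoretically via the Galois cover $\pi_r: Y_r \to X_r$.

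The key computational step is a Hochschild–Serre / Lyndon spectral sequence for the extension
\ga{}{ 1 \to \pi_1(Y_r, y_r) \to \pi_1(X_r, x_r) \to S_{r+1} \to 1, \notag}
together with the fact that $\pi_1(Y_r, y_r) = H_1(A_r) = H_1(E) \otimes_\Z \Z^r$ acts trivially on $\sE nd(\Lambda_r)$ (since $\mathbb L_r$ restricted to $\pi_1(Y_r)$ is trivial). So
\ga{}{ H^1(X_r, \sE nd(\mathbb L_r)) \text{ fits into } 0 \to H^1(S_{r+1}, \sE nd(\Lambda_r)) \to H^1(\pi_1(X_r), \sE nd(\Lambda_r)) \to H^1(H_1(A_r), \sE nd(\Lambda_r))^{S_{r+1}}. \notag}
Since $S_{r+1}$ is finite and we work over $\C$, the first term vanishes, so $H^1(\pi_1(X_r), \sE nd(\Lambda_r))$ injects into (in fact equals the $S_{r+1}$-invariants of the second, as the next differential lands in $H^2(S_{r+1},-)=0$)
\ga{}{ \big(H^1(H_1(A_r), \C) \otimes \sE nd(\Lambda_r)\big)^{S_{r+1}} = \big((H_1(E)^\vee \otimes_\C (\Z^r)^\vee_\C) \otimes \sE nd(\Lambda_r)\big)^{S_{r+1}}. \notag}
Now $H_1(E)^\vee_\C \cong \C^2$ is a trivial $S_{r+1}$-module, $(\Z^r)^\vee_\C \cong \Lambda_r \otimes \C$ (the dual of the standard representation is again the standard representation), and by property $(\star\star)$, $\Lambda_r\otimes\C$ is a direct summand of $\sE nd^0(\Lambda_r\otimes\C)$, hence certainly of $\sE nd(\Lambda_r\otimes\C)$. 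Therefore the invariants $(\Lambda_r \otimes \sE nd(\Lambda_r))^{S_{r+1}} \supseteq (\Lambda_r \otimes \Lambda_r^\vee)^{S_{r+1}} = \Hom_{S_{r+1}}(\Lambda_r, \Lambda_r) = \C \ne 0$ by Schur and irreducibility $(\star)$, and tensoring with $\C^2$ from $H_1(E)$ only makes it bigger. So the tangent space is nonzero; in fact one sees $\dim H^1 \ge 2$. For the determinant-fixed statement one replaces $\sE nd$ by $\sE nd^0$ and uses $(\star\star)$ directly: $\Lambda_r\otimes\C$ embeds $S_{r+1}$-equivariantly into $\sE nd^0(\Lambda_r\otimes\C)$, giving $(H_1(E)^\vee\otimes \sE nd^0(\Lambda_r))^{S_{r+1}} \ne 0$.

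The main obstacle, or rather the point requiring care, is bookkeeping the $S_{r+1}$-action correctly: one must check that $S_{r+1}$ acts trivially on the $H_1(E)$ tensor factor (it does, since the construction uses $\mathbb I_E \otimes \rho_r$), that the relevant module is $(\Z^r)_\C^\vee$ with its standard-representation structure rather than something twisted, and that the edge maps in the spectral sequence behave as claimed — all of which follow because the higher cohomology of the finite group $S_{r+1}$ with coefficients in a $\C$-vector space vanishes, collapsing the spectral sequence to the relevant $E_2^{0,1}$ and $E_2^{1,0}$ terms. A secondary point is to phrase the conclusion moduli-theoretically: nonvanishing of $H^1(X_r, \sE nd^0(\mathbb L_r))$ (resp. $H^1(X_r, \sE nd(\mathbb L_r))$) is precisely the statement that the Zariski tangent space of $M_B(X_r,r,\det\mathbb L_r)$ (resp. $M_B(X_r,r)$) at the corresponding point is nonzero, which together with Proposition~\ref{prop:rig} exhibits $\mathbb L_r$ as rigid but not cohomologically rigid.
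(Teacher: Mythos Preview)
Your proof is correct and follows essentially the same route as the paper's: both use the Hochschild--Serre spectral sequence for $1 \to H_1(A_r) \to \pi_1(X_r) \to S_{r+1} \to 1$, observe that the finite-group contributions vanish over $\C$, identify $H^1(X_r,\sE nd^0(\mathbb L_r))$ with $\Hom_{S_{r+1}}(H_1(A_r)\otimes\C,\sE nd^0(\Lambda_r\otimes\C))$, and conclude nonvanishing from property $(\star\star)$ and Schur's lemma. Your write-up is in fact more explicit than the paper's about why the spectral sequence degenerates and about the tensor decomposition $H_1(A_r)\otimes\C \cong \C^2 \otimes (\Lambda_r\otimes\C)$ with trivial $S_{r+1}$-action on the $H_1(E)$ factor.
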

\begin{proof}
The Hochschild-Serre spectral sequence for $\pi_1(X_r,x_r)$ reads
\ml{}{ H^1(X_r, \sE nd(^0\mathbb{L}_r))= H^1(H_1(A_r), \sE nd^0(\mathbb{L}_r))^{S_{r+1}}
= \\ {\rm Hom}_{S_{r+1}}(H_1(A_r), 
\sE nd^0(\mathbb{L}_r))= {\rm Hom}_{S_{r+1}}( \underbrace{\Lambda_r\oplus \ldots \oplus \Lambda_r}_{r-{\rm times}}, 
\sE nd^0(\mathbb{L}_r)) \neq 0. \notag}
The non-vanishing on the right comes from Property $(\star \star)$. The Zariski tangent space to $M_B(X_r,r, {\rm det}(\mathbb L_r))$ at $\mathbb L_r$ is precisely $H^1(X_r, \sE nd^0(\mathbb{L}_r))$, which is a direct summand
$H^1(X_r, \sE nd(\mathbb{L}_r))$, which in turn is the Zariski tangent space to $M_B(X_r,r)$ at  $\phi(\mathbb L_r)$. 
This concludes the proof. 
\end{proof}
\begin{rmk}
In Proposition~\ref{prop:mainex},  $\mathbb L_r$ by definition is of the shape $f_r^* \mathbb L'_r$, for an irreducible
local system $\mathbb L'_r$ on $P_r/S_r$, which is uniquely defined.  But 
\ga{}{ H^1(P_r/S_r, \sE nd^0(\mathbb L'_r))=
H^1(S_r, \sE nd^0(\mathbb L'_r))
=0 \notag} 
as $S_r$ is finite, thus $\mathbb L'_r$ is cohomologically rigid on $P_r/S_r$.
Given an irreducible local system $\mathbb L$ defined by the representation $\rho: \pi_1(X,x)\to GL_r(\C)$ on $X$ smooth projective,
we could dream of the existence of a factorization 
  $\rho: \pi_1(X,x)\to \pi_1(Y,y)\xrightarrow{\rho'}  GL_r(\C)$ where $Y$ is smooth projective, defining $\mathbb L'$ on $Y$, with the property that $\mathbb L'$ is cohomologically rigid. Even without requesting $\pi_1(X,x)\to \pi_1(Y,y)$ to come from geometry, we have no access to it. Together with \cite[Theorem~1.1]{EG18}  it would prove Simpson's integrality conjecture in general  on $X$ smooth projective. 
\end{rmk}
\section{ The quasi-projective case } \label{sec:qproj}
We now assume $r=2$. 
We replace $A_2$ in the previous section with the $2$-dimensional torus 
\ga{}{ T=\G_m^{3}/({\rm diagonal} \  \G_m) \notag}
  which is the Jacobian of $\P^1\setminus \{0,1,\infty\}$. We take the same $P_2$ as in Section~\ref{sec:proj} and define $V=T\times_\C P_2, \ U=V/S_3$ with the $S_3$-Galois cover $q: V\to U$. This yields the Galois exact sequence
\ga{}{0\to \pi_1(V,v)=\pi_1(T)=\Z^2\to \pi_1(U,u)\to S_3\to 1\notag}
based at a point $(1,1,1)\times p=v$ with $u =q(v)$, 
which is identified with  the homotopy exact sequence of $V\to P_2/S_3$ with fibre $T$.  This fibration has a section $\{1,1,1\}\times P/S_3,$ thus
\ga{}{ \pi_1(U,x)= \Gamma_2.\notag}
Thus $\rho_2$ defines a local system $\mathbb{M}$ on $U$.  Again we introduce the moduli spaces 
$M_B(U,2, {\rm det}( \mathbb M))$ and $M_B(U,2)$ with the forgetful map  
\ga{}{ \varphi: M_B(U,2, {\rm det} ( \mathbb M)) \to M_B(U,2).\notag}
\begin{lem}  \label{lem:open}
$\mathbb{M}$ is rigid and not cohomologically rigid in $M_B(U,2, {\rm det}(\mathbb M))$, $\varphi(\mathbb M)$  is rigid and not cohomologically rigid in $M_B(U,2)$.
\end{lem}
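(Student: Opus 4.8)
The plan is to transcribe the proofs of Propositions~\ref{prop:rig} and~\ref{prop:mainex} to the quasi-projective setting: the $2$-torus $T$ plays the role of $A_r$, the simply connected $P_2$ is kept, $\pi_1(U,u)=\Gamma_2=\Lambda_2\rtimes S_3$ with $\pi_1(V,v)=\pi_1(T)=\Lambda_2$ normal, and the action of the $3$-cycle on $\Lambda_2$ is what, in place of the elliptic-curve argument of Section~\ref{sec:proj}, pins down the deformations.

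For the failure of cohomological rigidity, I would use the Hochschild--Serre spectral sequence for $\pi_1(U,u)$. Since $S_3$ is finite, $H^{>0}(S_3,-)$ vanishes on $\C$-vector spaces, so, $\mathbb M$ factoring through $S_3$ and $\pi_1(V,v)=\Lambda_2$ hence acting trivially on $\sE nd^0(\mathbb M)$,
\[
 H^1(U,\sE nd^0(\mathbb M)) = H^1(\pi_1(V,v),\sE nd^0(\mathbb M))^{S_3} = \Hom_{S_3}(\Lambda_2,\sE nd^0(\Lambda_2\otimes_\Z\C)),
\]
which is non-zero by Property~$(\star\star)$, exactly as in Proposition~\ref{prop:mainex}. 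This group is the Zariski tangent space to $M_B(U,2,{\rm det}(\mathbb M))$ at $\mathbb M$ and a direct summand of the Zariski tangent space to $M_B(U,2)$ at $\varphi(\mathbb M)$, so neither point is cohomologically rigid.

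For rigidity I would take an irreducible complex affine curve $C$ with a morphism $\tau\colon C\to M_B(U,2)$ and a point $c_0$ with $\tau(c_0)=\varphi(\mathbb M)$, writing $\mathbb M(c)$ for the local system attached to $\tau(c)$. As $\pi_1(V,v)=\Lambda_2$ is normal in $\pi_1(U,u)$, Clifford theory gives $\mathbb M(c)|_{\pi_1(V,v)}=\chi_{c,1}\oplus\chi_{c,2}$ with $\chi_{c,i}\in\Hom(\Lambda_2,\C^*)$, and $\{\chi_{c,1},\chi_{c,2}\}$ is a union of $S_3$-orbits. The $3$-cycle $\sigma$ has odd order and permutes this set of at most two elements, hence fixes each: $\sigma(\chi_{c,i})=\chi_{c,i}$. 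Now in the basis $\epsilon_1,\epsilon_2$ the endomorphism $\sigma-\id$ of $\Lambda_2$ has $\det(\sigma-\id)=3\neq0$, so $\Lambda_2/(\sigma-\id)\Lambda_2$ is finite (of order $3$) and every $\sigma$-invariant character of $\Lambda_2$ has order dividing $3$. Therefore the $\chi_{c,i}$ are torsion, the map $C\to\Hom(\Lambda_2,\C^*)^2/S_3$, $c\mapsto(\chi_{c,1},\chi_{c,2})$, has image in the finite set of torsion classes, and --- $C$ being connected --- is constant, equal to its value at $c_0$, which is the image of the trivial character of rank $2$. Hence $\mathbb M(c)|_{\pi_1(V,v)}$ is trivial and $\mathbb M(c)$ factors through $S_3=\pi_1(U,u)/\pi_1(V,v)$; since $U$ carries only finitely many irreducible rank-$2$ local systems with monodromy factoring through $S_3$, the morphism $\tau$ is constant, so $\varphi(\mathbb M)$ is an isolated (possibly fat) point of $M_B(U,2)$, i.e. rigid. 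Replacing $M_B(U,2)$ by $M_B(U,2,{\rm det}(\mathbb M))$ and $\varphi(\mathbb M)$ by $\mathbb M$ throughout, the same argument shows $\mathbb M$ is rigid.

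The only genuinely new ingredient beyond Sections~\ref{sec:sr}--\ref{sec:proj} is the computation that the $3$-cycle acts on $\Lambda_2$ with $\sigma-\id$ invertible over $\Q$, which is what forces the Clifford characters to be torsion now that there is no elliptic-curve factor. I expect this, together with the small orbit-counting step reducing the Clifford analysis to the $\sigma$-invariant case, to be the only point requiring care; non-cohomological rigidity is immediate from $(\star\star)$.
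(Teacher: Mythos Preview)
Your argument is correct and self-contained, but it takes a different route from the paper. The paper's proof is essentially a citation: rigidity is quoted from \cite[Lemma~2.11]{LM85} as a consequence of the stronger fact that $M_B(U,2)$ is $0$-dimensional (\cite[(2.10.1)]{LM85}), and the non-vanishing $H^1(\Gamma_2,\sE nd^0(\mathbb M))=\C$ is quoted from \cite[(2.10.4)]{LM85} (noted there as the special case of Property~$(\star\star)$). You instead rerun the projective arguments of Propositions~\ref{prop:rig} and~\ref{prop:mainex} with $H_1(A_r)$ replaced by $\pi_1(T)=\Lambda_2$; the Hochschild--Serre computation is literally the same as in Proposition~\ref{prop:mainex}, and for rigidity your observation that $\det(\sigma-\id)=3$ on $\Lambda_2$ is exactly the $r=2$ instance of the torsion computation in Claim~\ref{claim:tor} (indeed cleaner, since there is no $H_1(E)$ tensor factor to carry along). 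What your approach buys is independence from \cite{LM85} and a visible parallel with Section~\ref{sec:proj}; what the paper's approach buys is brevity and the slightly stronger input that the entire moduli space is $0$-dimensional rather than just the one point being isolated. One small slip: in the map $C\to\Hom(\Lambda_2,\C^*)^2/S_3$ you presumably mean the quotient by $S_2$ (swapping the two Clifford characters), not $S_3$; this does not affect the argument.
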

\begin{proof} 
For $\varphi(\mathbb M)$ the rigidity is proved in 
\cite[Lemma~2.11]{LM85} and is in fact a consequence of $M_B(U,2)$ being $0$-dimensional (\cite[(2.10.1)]{LM85}), which also shows that $\mathbb M$ is rigid. 
The cohomological statement $H^1(\Gamma_2, \sE nd^0 (\mathbb{M}))=\C$ is proved in 
  \cite[2.10.4]{LM85}, which is a special case of Property $(\star \star)$.  Hence, $\mathbb M$ and $\varphi(\mathbb M)$ are not cohomologically rigid.   
    \end{proof}
 
We now fix  the monodromies at infinity to be those of $\mathbb M$. As the global monodromy of $\mathbb M$ is finite, so are the monodromies at $\infty$, in particular they are quasi-unipotent.  We define
 $M_B(U,2,  {\rm det} (\mathbb M) , {\rm mon}(\mathbb M))$ as in~\cite[Section~2]{EG18}. 
 By  \cite[Remark~2.4]{EG18}, the Zariski tangent space at $\mathbb M$ of $M_B(U,2, {\rm det} (\mathbb M), {\rm mon}(\mathbb M))$ is equal to 
 \ga{3}{ {\rm Ker} \big( H^1(U, \sE nd^0(\mathbb M)) \xrightarrow{\rm restriction}  \oplus_{\gamma} H^1( \langle \gamma \rangle, \sE nd^0(\mathbb M))\big),}
 where the $\gamma$  are small loops around the components at $\infty$ of a smooth projective 
  compactification $U\hookrightarrow \bar U$ with strict normal crossings at infinity, and $\langle \gamma \rangle$ is the free commutative group spanned by it. We denote by $\mathbb M_0$ the local system $\mathbb M$ viewed in  $M_B(U,2,  {\rm det} (\mathbb M), {\rm mon}(\mathbb M))$.

\begin{lem} \label{lem:boundary}
$\mathbb{M}_0$ is cohomologically rigid.
\end{lem}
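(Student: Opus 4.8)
The plan is to compute the Zariski tangent space \eqref{3} and show it vanishes, using that we already know the ambient tangent space $H^1(U,\sE nd^0(\mathbb M))$ is one-dimensional by Lemma~\ref{lem:open}. So it suffices to exhibit a single loop $\gamma$ at infinity whose restriction map $H^1(U,\sE nd^0(\mathbb M)) \to H^1(\langle\gamma\rangle, \sE nd^0(\mathbb M))$ is injective, i.e. nonzero on the one-dimensional source. First I would unwind the geometry: $U = V/S_3$ with $V = T\times_\C P_2$, so a compactification is obtained from $T = \G_m^3/\G_m \hookrightarrow \bar T$ (the quotient by the diagonal of the three boundary divisors of $(\P^1)^3 \supset \G_m^3$), crossed with $P_2$ and divided by $S_3$. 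The components at infinity of $\bar U$ thus correspond to $S_3$-orbits of the three boundary divisors, which $S_3$ permutes transitively, so there is really just one boundary stratum up to the action, and a loop $\gamma$ around it generates a subgroup of $\pi_1(U,u) = \Gamma_2 = \Lambda_2 \rtimes S_3$ whose image I would identify explicitly: the loop around a $\G_m$-factor maps into $\Lambda_2 = \pi_1(T) = \Z^2$, and I would pin down which primitive vector $v_\gamma \in \Z^2$ it is.

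Next I would make the cohomology computation concrete. Via Hochschild--Serre as in the proof of Proposition~\ref{prop:mainex}, $H^1(U,\sE nd^0(\mathbb M)) = \Hom_{S_3}(\Lambda_2, \sE nd^0(\mathbb M))$, where $\Lambda_2 = \pi_1(T)$ carries the standard $S_3$-action and $\sE nd^0(\mathbb M) \cong \Lambda_2\otimes\C$ as an $S_3$-representation by Property $(\star\star)$ (for $r=2$, $\sE nd^0$ of the $2$-dimensional standard representation is again the standard representation). So the source is $\Hom_{S_3}(\Lambda_2\otimes\C, \Lambda_2\otimes\C) = \C$, spanned by (a scalar multiple of) the identity cocycle $c\colon \lambda \mapsto \rho_2(\lambda\text{-action})$, concretely the crossed homomorphism $\Lambda_2 \to \sE nd^0(\mathbb M)$, $\lambda\mapsto$ the image of $\lambda$ under $\Lambda_2 \xrightarrow{\sim} \sE nd^0(\mathbb M)$. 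Restricting to $\langle\gamma\rangle = \Z\cdot v_\gamma$, since the $S_3$- (hence $\langle\gamma\rangle$-) action on $\sE nd^0(\mathbb M)$ factors through the finite group and $v_\gamma$ generates a \emph{free} abelian group, $H^1(\langle v_\gamma\rangle, \sE nd^0(\mathbb M)) = (\sE nd^0(\mathbb M))_{\text{coinv}}$ under $v_\gamma$, and the restricted class is the image of $v_\gamma$ itself in this coinvariant space. Thus the restriction map is nonzero precisely when the image of $v_\gamma$ in $\sE nd^0(\mathbb M) \cong \Lambda_2\otimes\C$ is nonzero, which it is, because $v_\gamma$ is a nonzero element of $\Lambda_2$ and the isomorphism $\Lambda_2\otimes\C \xrightarrow{\sim} \sE nd^0(\mathbb M)$ of $S_3$-modules carries the nonzero vector $v_\gamma$ to a nonzero vector. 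Hence the kernel \eqref{3} is $0$, so $\mathbb M_0$ is cohomologically rigid; the case of $\varphi(\mathbb M_0)$ in the full moduli space follows as in Proposition~\ref{prop:mainex} since $\sE nd^0$ is a direct summand of $\sE nd$.

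The main obstacle I anticipate is bookkeeping rather than conceptual: correctly identifying the boundary divisors of $\bar U$ and the precise image in $\Lambda_2 = \Z^2$ of a loop $\gamma$ around each of them, keeping careful track of the identification $T = \G_m^3/\G_m$ and the induced identification $\pi_1(T) = \Z^3/\Z(1,1,1)$ with the root lattice $\Lambda_2$, and checking that under the $S_3$-equivariant isomorphism $\Lambda_2\otimes\C \cong \sE nd^0(\mathbb M)$ the relevant image vector is genuinely nonzero (equivalently, that $v_\gamma$ is not in the kernel of a map that is in fact injective, being a nonzero map between irreducible $S_3$-modules). One should also check there is indeed only one boundary component up to the $S_3$-action so that a single $\gamma$ suffices — but even if there were several, each contributes a restriction map and we only need one of them to be injective on the one-dimensional $H^1(U,\sE nd^0(\mathbb M))$. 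A minor point to handle carefully is that $\langle\gamma\rangle$ in \eqref{3} is taken to be the \emph{free} commutative group on the small loops around components meeting at a point of $\bar U\setminus U$; here the boundary divisor is smooth (a single component, times $P_2$, mod $S_3$), so $\langle\gamma\rangle = \Z$ and the coinvariants computation above applies directly.
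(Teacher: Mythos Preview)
Your proposal is correct and follows essentially the same strategy as the paper: reduce to showing one restriction map is injective, identify $H^1(U,\sE nd^0(\mathbb M))$ via Hochschild--Serre with $\Hom_{S_3}(\pi_1(T),\sE nd^0(\mathbb M))\cong\C$, observe that $\gamma$ lies in $\pi_1(T)$ so acts trivially on $\mathbb M$, and conclude that a nonzero $S_3$-equivariant map (hence injective, by irreducibility) cannot kill the nonzero vector $\gamma$. The only substantive difference is the compactification: the paper uses $T\hookrightarrow\P^2$ (complement of the three coordinate lines, permuted by $S_3$), which is cleaner than your $(\P^1)^3/\G_m$ suggestion---the latter has fixed points for the diagonal $\G_m$-action and does not directly yield a smooth normal-crossings boundary---but you already flagged this as bookkeeping, and once $\gamma$ is identified as a nonzero element of $\pi_1(T)$ the rest of your argument goes through verbatim. (A tiny quibble: $\pi_1(T)=\Z^3/\Z(1,1,1)$ is the weight-lattice quotient rather than the root lattice $\Lambda_2=\ker\Sigma$; they agree as $S_3$-representations over $\C$, which is all you use.)
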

\begin{proof} 
By \eqref{3},  it is enough to find one loop $\gamma$ at infinity such that the restriction map
\ga{}{ H^1(\pi_1(U,x), \sE nd^0(\mathbb{M})) \to H^1(\langle \gamma \rangle, \sE nd^0(\mathbb{M}))\notag}
is injective.  The action of $S_3$ on 
on $\P^2\setminus T$, which is a union of the $3$ coordinate lines, permutes them. This defines a $S_3$-equivariant compactification 
\ga{}{ V=T\times_{\C} P\hookrightarrow  \bar V=\P^2\times_\C P,\notag}
 thus a  compactification 
 \ga{}{ j: U\hookrightarrow \bar U=\bar V/S_3 \notag}
  where $S_3$ acts on $\bar V$ diagonally. This yields the commutative diagram
\ga{}{  \xymatrix{ P_2/S_3 \ar[dr]^=  \ar[r] &\ar[d] U\ar[r] & \ar[d] \bar U\\
& P_2/S_3 \ar[r]^= & P/S_3 }\notag}
with fibres $T\hookrightarrow \P^2$ above  the image $\bar p$  of $p\in P_2 $ in $P_2/S_3$. We take $\gamma$ in the fibre $\P^2$ which winds around one of the irreducible components of 
$\P^2\setminus T$. Thus, it maps to $1$ in $\pi_1(P_2/S_3,\bar p)=S_3$, that is  $\gamma\in \Z^2=\pi_1(T)\subset \pi_1(U,x)$.  Thus $\mathbb M$ restricted  to $\langle \gamma \rangle$ is trivial. 
The restriction map 
\ga{}{ H^1(\pi_1(U,u), \sE nd^0(\mathbb{M}))
\xrightarrow{\rm rest}  H^1(\langle \gamma \rangle, \sE nd(^0\mathbb{M})) 
\notag}
is then identified with
\ga{}{{\rm Hom}_{S_3}( \mathbb{M}, \sE nd^0(\mathbb{M}))  \to {\rm Hom}( \Z, \sE nd^0(\mathbb{M})) \notag}
via the inclusion   $\Z\langle \gamma \rangle \to \Z^2$ of $\Z$-modules.  Therefore,  the map $\rm rest$ is injective.  This concludes the proof.
\end{proof}

\begin{rmk} \label{rmk:sasha}
This remark is due to Alexander Petrov. Let $S$ be a smooth complex projective variety, $\mathbb L_S$  be an irreducible rigid local system on $S$. We denote by $\mathbb L'_r$ 
the exterior product $\mathbb L_r\boxtimes \mathbb L_S$ on $X_r\times_{\C} S$  in Proposition~\ref{prop:rig}, by  $\mathbb L'$,  resp. $\mathbb L'_0$ 
the exterior product $\mathbb L \boxtimes \mathbb L_S$ on $U\times_{\C} S$  viewed in 
$M_B(U\times_{\C} S,2\cdot {\rm rank}(\mathbb L_S), {\rm det}(\mathbb \mathbb L'))$, resp.
$M_B(U\times_{\C} S,2\cdot {\rm rank}(\mathbb L_S), {\rm det}(\mathbb \mathbb L')), {\rm mon}(\mathbb L'))$ 
in  Lemma~\ref{lem:open}, resp.  Lemma~\ref{lem:boundary}. Then 
\begin{claim} \label{claim:sasha}
 Proposition~\ref{prop:mainex} remains true with $\mathbb L_r$ replaced with $\mathbb  L'_r$ and  Lemma~\ref{lem:open} and Lemma~\ref{lem:boundary} remain true with $\mathbb L$ replaced with $\mathbb L'$.
\end{claim}
\begin{proof}
We applied K\"unneth formula to $H^1$ of $\sE nd(\mathbb L'_r)= \sE nd(\mathbb L_r) \boxtimes \sE nd(\mathbb L_S)$. This yields the cohomological part of the statement. Similarly for $\mathbb L'$. As for rigidity:  In a $T$-deformation 
  $(\mathbb L'_r)_t$  of $\mathbb L'_r$ as in Proposition~\ref{prop:rig}, the restriction of 
  $(\mathbb L'_r)_t$ to $x\times S$ for any complex point $x\in X_r$ is isomorphic to the restriction of 
    $(\mathbb L'_r)_{t_0}$ to $x\times S$  by rigidity of 
  $\mathbb L_S$, and similarly  the restriction of 
  $(\mathbb L')_t$ to $X_r\times s$ for any complex point $s\in S$  is isomorphic to the restriction of 
  $(\mathbb L')_{t_0}$ to $X_r\times s$  by rigidity of $ \mathbb L_r$. The K\"unneth property for  $\pi_1(X_r\times_{\C} S)$ implies that  $(\mathbb L'_r)_t \cong  (\mathbb L'_r)_{t_0}  $. Similary for $\mathbb L'$. 

\end{proof}
In particular, we can take $S$ to be a positive projective Shimura variety of real rank $\ge 2$, on which all the irreducible local systems are rigid, even cohomologically rigid, and take $\mathbb L_S$ to be a rank $\ge 2$ irreducible local system.
In this way,  we  obtain examples as in (i) and (ii)  (with
 different ranks) with infinite monodromy.

\end{rmk}

\end{document}